\swapnumbers \numberwithin{equation}{section}
\theoremstyle{plain}
\newtheorem{thm}{Theorem}[section]
\newtheorem{prop}[thm]{Proposition}
\newtheorem{cor}[thm]{Corollary}
\theoremstyle{definition}
\newtheorem{defin}[thm]{Definition}
 \newcommand{\Wi}{\widetilde}
\def\scr{\mathcal}
\def\Z{{\mathbb Z}}
\def\Q{{\mathbb Q}}
\def\R{{\mathbb R}}
\def\N{{\mathbb N}}
\def\1{\hbox{\rm\rlap {1}\hskip.03in{\rom I}}}
\def\Bbbone{{\rm1\mathchoice{\kern-0.25em}{\kern-0.25em}
{\kern-0.2em}{\kern-0.2em}I}}
\long\def\forget#1\forgotten{} %
\newcommand\ver[1]{\marginpar{\tiny Changed in Ver \VER}}
\date{\today}
\begin{document}

\title[Macroscopic dimension and duality
groups]{Macroscopic dimension and duality groups}

\author[A.~Dranishnikov]{Alexander  Dranishnikov} %
\thanks{Supported by NSF grant DMS-0904278}

\address{Alexander N. Dranishnikov, Department of Mathematics, University
of Florida, 358 Little Hall, Gainesville, FL 32611-8105, USA}
\email{dranish@math.ufl.edu}

\subjclass[2000]
{Primary 53C23; 
Secondary 55M10, 55M30, }

\keywords{}

\begin{abstract}
We show that for a rationally inessential orientable closed
$n$-manifold $M$ whose fundamental group $\pi$ is a duality group
the macroscopic dimension of its universal cover
is strictly less than $n$:$$ \dim_{MC}\Wi M<n.$$ As a corollary we
obtain the following 
\begin{thm} The inequality $ \dim_{MC}\Wi M<n$ holds for the universal cover
of a closed spin $n$-manifold $M$ with a positive scalar curvature
metric if the fundamental group $\pi_1(M)$ is a virtual duality group virtually satisfying the
Analytic Novikov Conjecture.
\end{thm}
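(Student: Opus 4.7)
The strategy is to bootstrap this theorem to the rational-inessentiality + duality-group result (the unlabelled statement preceding the corollary) by passing to a finite cover and invoking the standard Rosenberg machinery. Let $\pi=\pi_1(M)$. By hypothesis $\pi$ contains a finite index subgroup that is a duality group and a (possibly different) finite index subgroup satisfying the Analytic Novikov Conjecture. Intersecting them yields a finite index $\pi'\le\pi$ enjoying both properties, since being a duality group is preserved under finite index subgroups by Bieri--Eckmann, and the rational injectivity of the assembly map descends to finite index subgroups via a standard transfer argument. Let $p\colon M'\to M$ be the finite cover corresponding to $\pi'$. Then $M'$ is a closed spin orientable $n$-manifold, the pullback metric still has positive scalar curvature, $\pi_1(M')=\pi'$, and crucially $\Wi{M'}=\Wi M$.

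Next I would show that $M'$ is rationally inessential. Since $M'$ is spin with psc, Lichnerowicz combined with Rosenberg's construction forces the index class $\alpha(M')\in KO_n(C^*_r(\pi'))$ to vanish. The Analytic Novikov Conjecture for $\pi'$ gives rational injectivity of the assembly map $KO_n(B\pi')\otimes\Q\to KO_n(C^*_r(\pi'))\otimes\Q$, and this forces $f_*[M']=0$ in $KO_n(B\pi')\otimes\Q$, where $f\colon M'\to B\pi'$ is the classifying map of the universal cover. The Chern character gives a natural isomorphism $KO_n(B\pi')\otimes\Q\cong\bigoplus_k H_{n-4k}(B\pi';\Q)$, whose top ($k=0$) summand carries the class $f_*[M']$ in rational ordinary homology. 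Hence $f_*[M']=0$ in $H_n(B\pi';\Q)$, i.e.\ $M'$ is rationally inessential.

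Having arranged that $M'$ is a rationally inessential closed orientable $n$-manifold whose fundamental group $\pi'$ is a duality group, the main theorem of the paper applies and yields $\dim_{MC}\Wi{M'}<n$. Because $\Wi{M'}=\Wi M$, this is exactly the desired conclusion. The one nontrivial point to verify carefully is the Rosenberg/Novikov step of paragraph two: one must use the $KO$-theoretic (rather than merely $K$-theoretic) version of Analytic Novikov, and one must appeal to the Chern character splitting in order to extract the $H_n$ summand from $KO$-theory. Both are well-established in the literature of the Gromov--Lawson--Rosenberg circle of ideas, so modulo invoking the correct formulation of the hypothesis, this step is a citation; the rest of the argument is the structural reduction to a finite cover and the direct application of the paper's main theorem.
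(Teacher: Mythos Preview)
Your overall strategy---pass to a finite cover whose fundamental group is a duality group satisfying the Analytic Novikov Conjecture, then use Rosenberg's theorem and the Chern--Dold character to obtain rational inessentiality, then invoke the paper's main result---matches the paper's approach in the generic case. However, there is a gap: the result you invoke, ``rationally inessential $+$ duality fundamental group $\Rightarrow \dim_{MC}\Wi M<n$'', is established in the paper only as Corollary~\ref{Cor2}, which carries the extra hypothesis $cd(\pi)\ne n+1$. (That restriction comes from the exceptional case in Corollary~\ref{Cor1}, where $H^{(\infty)}_{N-1}(\pi)$ is not shown to be torsion-free.) So your argument, as written, handles only $cd(\pi')\ne n+1$; the abstract's unrestricted statement is not proved in the paper independently of the very theorem you are trying to establish.

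The paper treats the case $cd(\pi')=n+1$ by a separate argument that uses more than rational inessentiality. In the paper's formulation the Analytic Novikov Conjecture asserts that $\alpha$ is an \emph{integral} monomorphism, not merely rationally injective, so one actually gets $f_*([M']_{KO})=0$ in $KO_n(B\pi')$, not just after tensoring with~$\Q$. Since $B\pi'$ has an $(n{+}1)$-dimensional model, the quotient $B\pi'/(B\pi')^{(n-1)}$ is homotopy equivalent to a wedge of $n$-spheres, Moore spaces $M(\Z_m,n)$, and $(n{+}1)$-spheres; the induced map $\bar f\colon S^n\to B\pi'/(B\pi')^{(n-1)}$ must then be null-homotopic, because any essential map of $S^n$ to one of these wedge summands takes $[S^n]_{KO}$ to a nonzero class. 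This forces $f_*([M'])=0$ in $H_n(B\pi';\Z)$, so Theorem~\ref{Obstruction} applies directly. Your weaker hypothesis of rational injectivity of the assembly map would not suffice for this step.
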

\end{abstract}

\maketitle

\section {Introduction}

In his book dedicated to Gelfand's 80th anniversary~\cite{Gr} Gromov
introduced the notion of macroscopic dimension and proposed a
conjecture that the macroscopic dimension of the
universal cover $\Wi M$ of a closed $n$-manifold with a positive
scalar curvature metric is at most $n-2$. This conjecture was proven
by D. Bolotov for 3-manifolds~\cite{B}. It was proved recently by
Bolotov and myself~\cite{BD} for spin $n$-manifolds, $n>3$ whose
fundamental group satisfies the Analytic Novikov Conjecture and the
Rosenberg-Stoltz K-theoretic condition~\cite{RS}: $ko_*(\pi)\to KO_*(\pi)$ is a
monomorphism. In this paper we show that in the case when $\pi$ is a
duality group the inequality $ \dim_{MC}\Wi M<n$ can be proven
without that $K$-theoretic condition.

We prove the inequality $\dim_{MC}\Wi M<n$ for all rationally
inessential orientable closed $n$-manifold $M$ whose fundamental
group is a duality group. The rational inessentiality means that
$f_*([M])=0$ in $H_n(B\pi;\Q)$. In the case of a spin manifold with
a positive scalar curvature metric the rational inessentiality
follows from Rosenberg's theorem and the KO-homology Chern-Dold
character.

\section{Homological obstruction to the inequality $\dim_{MC}\Wi M<n$}

A map $f:X\to Y$ of a metric space is {\em uniformly cobounded} if
there is a constant $C>0$ such that $diam(f^{-1}(y))<C$ for all
$y\in Y$.

We modified in~\cite{Dr}
Gromov's definition of macroscopic dimension as follows.
\begin{defin}
A metric space $X$ has the {\em macroscopic dimension} less than or
equal to $k$, $\dim_{MC}X\le k$, if there is a Lipschitz uniformly
cobounded map $f:X\to N^k$ to a $k$-dimensional simplicial complex.
\end{defin}
Here we assume that a simplicial complex has a uniform metric, i.e.
metric induced from the Hilbert space for the natural imbedding
$K\subset\Delta\subset\ell_2$ into the standard simplex in $\ell_2$.

We recall that locally finite homology groups $H^{lf}_*(\Wi X;G)$ of a complex $\Wi X$
with coefficients in an abelian group $G$ are defined by means of the chain complex
$C^{lf}_*(\Wi X;G)$
of infinite locally finite chains with coefficients in $G$. Thus,
$$C^{lf}_*(\Wi X;G)=\{\sum_{e\in E_n(\Wi X)} \lambda_ee\mid
\ \lambda_e\in G\}.$$ Here $E_n(\Wi X)$ denotes the set of $n$-cells
of a CW complex $\Wi X$. The local finiteness condition on a chain
requires that for every $x\in\Wi X$ there is a neighborhood such
that the number of $n$-cells $e$ intersecting $U$ for which
$\lambda_e\ne 0$ is finite. This condition is satisfied
automatically when $\Wi X$ is a locally finite complex.

Now let $\Wi X$ be the universal cover of a complex $X$ with fundamental group $\pi$.
The equivariant locally
finite homology groups $H_*^{lf,\pi}(\Wi X;G)$ with coefficients in an abelian group $G$
are defined by the subcomplex of infinite {\em
locally finite invariant chains}
$$ C_n^{lf,\pi}(\Wi X;G)=\{\sum_{e\in E_n(\Wi X)} \lambda_ee\mid
\lambda_{ge}=\lambda_e,\ \lambda_e\in G\}\subset C^{lf}_n(\Wi X;G).$$
We note that there is an obvious
equality $H_*(X;G)=H_*^{lf,\pi}(\Wi X;G)$.

A normed abelian group $(G,|\ |)$ consists of an abelian group $G$
and a nonnegative function $|\ |:G\to\R$ that satisfies the axioms
(1) $|a|=|-a|$; (2) $|a+b|\le|a|+|b|$; (3) $|a|=0$ iff $a=0$. We
consider the natural norm on $\Z$ and the norm with value 1 on
$\Z_p\setminus\{0\}$.

Let $G$ be a normed abelian group. A chain $$\sum_{e\in E_n(\Wi X)}
\lambda_ee\in C_n^{lf}(\Wi X;G)$$ is called {\em bounded} if there
is a uniform bound on $\lambda_e$. Note that bounded infinite chains
form a chain subcomplex  $C_*^{(\infty)}(\Wi X;G)\subset
C^{lf}_*(\Wi X;G)$. The homology groups $H^{(\infty)}_n(\Wi X;G)$ of
that complex are called {\em $\ell_{\infty}$-homology} in view  of the analogy with the
$\ell_{(\infty)}$-cohomology defined in~\cite{Ge}. Note that these homology groups first appeared 
in~\cite{BW} under the name {\em uniformly finite homology} .

The
inclusion of chain groups $C_*^{lf,\pi}(\Wi X;G)\to
C_*^{(\infty)}(\Wi X;G)$ defines a homomorphism
$$ pert_*^X :H_*(X;G)\to H_*^{(\infty)}(\Wi X;G).$$

In the case of a discrete group $\pi$
we will use notations $$H_*(\pi;G)=H_*(B\pi;G),
\ \ \ \ H^{(\infty)}(\pi;G)= H_*^{(\infty)}(E\pi;G),$$
$$H^{lf}_*(\pi;G)=H^{lf}_*(E\pi;G),\ \ \text{and}\ \
pert^{\pi}_*=pert^{B\pi}_*.$$

Note that $\ell_{\infty}$-homology is a special case (when
coefficient group $G$ is a trivial $\pi$-module) of the almost
equivariant homology  defined in \cite{Dr}. Thus, Theorem 4.2
of~\cite{Dr} in the language of $\ell_{\infty}$-homology can be
rephrased as follows:
\begin{thm} \label{Obstruction}
For a closed orientable $n$-manifold $M$ the following conditions are equivalent:

1. $\dim_{MC}\Wi M<n$;

2. $f_*([M])\in \ker\{pert_*^{\pi}:H_n(\pi;\Z)\to
H_n^{(\infty)}(\pi;\Z)\}$ where $f:M\to B\pi$ is a map classifying
the universal cover of $M$.

\end{thm}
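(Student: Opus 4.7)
The plan is to reduce the equivalence to \cite[Theorem~4.2]{Dr}, which establishes exactly this dichotomy in the setting of almost equivariant homology. The key observation is that $\ell_\infty$-homology is the special case of the almost equivariant homology of \cite{Dr} in which the coefficient group is given the trivial $\pi$-action, and under this identification the $pert$ morphism defined here coincides with the perturbation homomorphism from invariant chains to almost-invariant chains used in \cite{Dr}. So the first step of the proof would be to spell out this dictionary carefully and then note that Theorem~\ref{Obstruction} is literally the translation of \cite[Theorem~4.2]{Dr} into the new language.

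For completeness I would also include direct sketches of the two implications. For $(1)\Rightarrow(2)$: given a Lipschitz uniformly cobounded $p\colon\Wi M\to N$ with $\dim N<n$, a bounded-diameter set-theoretic section $s\colon N\to\Wi M$ is available by uniform coboundedness, which shows $p$ is a coarse equivalence and hence induces an isomorphism on $\ell_\infty$-homology. Since $C^{(\infty)}_n(N;\Z)=0$ for dimensional reasons, the bounded fundamental class $pert^M([M])\in H^{(\infty)}_n(\Wi M;\Z)$ must already vanish. Applying the equivariant classifying map $F\colon\Wi M\to E\pi$ and using naturality of $pert$ with respect to $F$ then yields $pert^{\pi}(f_*[M])=F_*(pert^M([M]))=0$.

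For the converse $(2)\Rightarrow(1)$: I would begin with a bounded chain $c\in C^{(\infty)}_{n+1}(E\pi;\Z)$ realizing $\partial c=F_*([\Wi M])$. Because $E\pi$ is contractible and $F$ is the classifying map, an equivariant lifting argument transfers $(F_*[\Wi M],c)$ back to $\Wi M$ as a bounded $(n+1)$-chain whose boundary is the fundamental class. Using this filling chain one deforms the identity of $\Wi M$, after an equivariant refinement of the triangulation so that the filling becomes simplicial with uniformly bounded coefficients, off the top-dimensional cells into the $(n-1)$-skeleton of a derived complex $N$. The resulting deformed map is Lipschitz by simplicial approximation, and uniformly cobounded because the bounded multiplicities of the filling limit how much of $\Wi M$ can collapse onto any given simplex of $N$.

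The main obstacle is the converse direction, whose implementation is the technical heart of \cite[Theorem~4.2]{Dr}: making the obstruction-theoretic deformation simultaneously Lipschitz and uniformly cobounded requires precise control of the geometry of the filling chain and of the refining triangulation. The forward direction, by contrast, is essentially a dimension count combined with naturality, so once the dictionary in the first step is in place the work lies almost entirely in the converse.
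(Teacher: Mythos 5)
Your first paragraph is exactly what the paper does: the theorem carries no proof of its own, only the observation that $\ell_\infty$-homology is almost equivariant homology of \cite{Dr} with trivial $\pi$-module coefficients, so that Theorem~4.2 of \cite{Dr} translates verbatim. That part of your proposal is the paper's entire argument, and you got it right.

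The additional sketches are your own and go beyond what the paper supplies; the direction $(2)\Rightarrow(1)$ you correctly flag as the technical content of \cite{Dr}, and your outline is consistent with that. However, your sketch of $(1)\Rightarrow(2)$ has a gap. A Lipschitz uniformly cobounded map $p\colon\Wi M\to N$ need not be a coarse equivalence: uniform coboundedness gives a set-theoretic section $s$ with $d(s(p(x)),x)$ uniformly small, but it does not make $s$ controlled (bornologous), and without that $s$ is not a coarse inverse. Lipschitzness bounds how much $p$ can \emph{expand} distances and coboundedness controls the scale-zero fibers, but neither prevents $p$ from contracting pairs of far-apart points to nearby points, which is exactly what a coarse inverse would have to undo. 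In particular, were $p$ automatically a coarse equivalence, one would conclude $\operatorname{asdim}\Wi M\le\dim N<n$, a statement that is not part of the theorem and not what \cite{Dr} proves. The correct mechanism is instead to push the bounded fundamental cycle of $\Wi M$ forward (after simplicial approximation) into the $(n-1)$-complex $N$, where every bounded $n$-chain vanishes for dimensional reasons, and then to compare, via an almost-equivariant map $N\to E\pi$ covering the classifying map up to bounded displacement, with the image under $F$; one does not need, and generally does not have, an isomorphism $H^{(\infty)}_*(\Wi M)\cong H^{(\infty)}_*(N)$.
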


\section{Main Result}

We recall~\cite{Br} that a group $\pi$ of type FP
is called {\em a duality group} if for all $i$ there are natural isomorphisms
$$H^i(\pi,-)\to H_{n-i}(\pi, D\otimes -)$$
where $n=cd(\pi)$, the cohomological dimension of $\pi$, and $D=H^n(\pi,\Z\pi)$.
If $D=\Z$ then $\pi$ is called  a {\em Poincare duality group}.
Examples of Poincare duality groups are the fundamental groups of closed
aspherical manifolds. Examples of duality groups different from Poincare
duality groups include free groups, all knot groups, torsion free arithmetic
groups and the
products of all the above.

\begin{prop}\label{finite coefficients}
For any duality group $\pi$ with $cd(\pi)=N$, and $p\in\N$,
$$H^{(\infty)}_n(\pi;\Z_p)=0 \ \ \text{for}\ \  n\ne N$$ and
$$H^{(\infty)}_N(\pi;\Z_p)=\oplus\Z_p\ne 0.$$
\end{prop}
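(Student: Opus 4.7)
The plan is to identify $H^{(\infty)}_*(\pi;\Zp)$ with an $\mathrm{Ext}$ group over $\Z$ of the dualizing module $D$ and then exploit the global dimension of $\Z$.

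Since the norm on $\Zp$ takes value $1$ on every nonzero element, every chain in $C^{lf}_*(E\pi;\Zp)$ is automatically uniformly bounded, so $C^{(\infty)}_*(E\pi;\Zp)=C^{lf}_*(E\pi;\Zp)$ and hence $H^{(\infty)}_*(\pi;\Zp)=H^{lf}_*(E\pi;\Zp)$. Choosing a $B\pi$ with finitely many cells in each dimension (available for the duality groups in the examples quoted), inspection of cellular chains yields the chain-level identification
\[
C^{lf}_n(E\pi;G)\;\cong\;C_n(E\pi)\otimes_{\Z\pi}\Hom_\Z(\Z\pi,G),
\]
so that $H^{lf}_*(E\pi;G)\cong H_*(\pi;G^\pi)$ with the coinduced coefficient module $G^\pi:=\Hom_\Z(\Z\pi,G)$.

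Next I fix a finitely generated free resolution $P_*\to\Z$ over $\Z\pi$ of length $N=\cd(\pi)$ and set $P_n^{\vee}:=\Hom_{\Z\pi}(P_n,\Z\pi)$. For finitely generated free $P_n$ the natural evaluation map $P_n\otimes_{\Z\pi}\Hom_\Z(\Z\pi,G)\to\Hom_\Z(P_n^{\vee},G)$ is an isomorphism, so $P_*\otimes_{\Z\pi}G^\pi\cong\Hom_\Z(P_*^{\vee},G)$ on the chain level. By the duality-group hypothesis, the cochain complex $P_*^{\vee}$ computes $H^*(\pi;\Z\pi)$, which is $0$ for $*\neq N$ and equals $D$ for $*=N$. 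Since each $P_n^{\vee}$ is $\Z$-free, reversing the degree direction turns $P_*^{\vee}$ into a $\Z$-projective resolution of $D$ (shifted to top degree), giving
\[
H^{(\infty)}_n(\pi;G)\;\cong\;\mathrm{Ext}^{\,N-n}_\Z(D,G).
\]
Specializing to $G=\Zp$ and using that $\Z$ has global dimension one yields $\mathrm{Ext}^k_\Z(D,\Zp)=0$ for $k\ge 2$. Since $D$ is torsion-free (in the standard examples, in fact $\Z$-free), $\mathrm{Ext}^1_\Z(D,\Zp)=0$, so $H^{(\infty)}_n(\pi;\Zp)=0$ for $n\neq N$. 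In degree $N$, $H^{(\infty)}_N(\pi;\Zp)\cong\Hom(D,\Zp)$, a nonzero $\Zp$-vector space, i.e.\ a direct sum of copies of $\Zp$.

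The main obstacle is the chain-level identification in the first step: one must verify that the cellular boundary of $C^{lf}_*(E\pi;G)$ corresponds to the differential of $C_*(E\pi)\otimes_{\Z\pi}G^\pi$, and that a cocompact model of $B\pi$ is genuinely available (or can be replaced by working directly with an algebraic resolution). The homological algebra thereafter is routine; the only delicate point is the vanishing of $\mathrm{Ext}^1_\Z(D,\Zp)$, which follows cleanly when $D$ is $\Z$-free and, more generally, from the fact that $\mathrm{Ext}^1_\Z(-,\Z)$ of a torsion-free countable abelian group is divisible.
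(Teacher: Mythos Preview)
Your argument is correct and arrives at the same conclusion, but by a somewhat different route than the paper. The paper first invokes the field-coefficient duality $H^{lf}_n(E\pi;\Z_p)\cong H^n_c(E\pi;\Z_p)$ (really the $\Z_p$-linear dual), then Brown's identification $H^*_c(E\pi;G)\cong H^*(\pi;\Z\pi\otimes G)$, and finally the Universal Coefficient Formula together with the fact that $H^*(\pi;\Z\pi)$ is concentrated in degree $N$ and torsion-free. You instead identify $H^{lf}_*(E\pi;G)$ with $H_*(\pi;\Hom_\Z(\Z\pi,G))$ and then, using a finite free resolution, obtain the closed formula $H^{(\infty)}_n(\pi;G)\cong\mathrm{Ext}^{\,N-n}_\Z(D,G)$, valid for arbitrary $G$. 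This is a more algebraic and more general statement than what the paper proves; the paper's argument is shorter but leans on citations to Brown. Your vanishing of $\mathrm{Ext}^1_\Z(D,\Z_p)$ via divisibility of $\mathrm{Ext}^1_\Z(D,\Z)$ is correct and in fact does not require countability: for torsion-free $D$ the short exact sequence $0\to D\xrightarrow{p}D\to D/pD\to 0$ already forces multiplication by $p$ to be surjective on $\mathrm{Ext}^1_\Z(D,\Z)$.

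Two small points. First, your chain-level identification $C^{lf}_*(E\pi;G)\cong C_*(E\pi)\otimes_{\Z\pi}\Hom_\Z(\Z\pi,G)$ needs $B\pi$ to have finitely many cells in each dimension; the paper's proof has the same hidden hypothesis, since Brown's Lemma~VIII.7.4 requires it as well. For duality groups one has type FP but not automatically a finite-type $K(\pi,1)$, so strictly speaking both arguments should be run with a finite projective resolution rather than a geometric $E\pi$ (your parenthetical remark already points in this direction). Second, the nonvanishing $\Hom_\Z(D,\Z_p)\ne 0$ amounts to $D/pD\ne 0$; neither you nor the paper justifies this, but it is not used in the downstream corollaries, which rely only on the vanishing for $n\ne N$.
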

\begin{proof} It suffices to consider the case when $p$ is prime.
Since the coefficient group is finite, $H^{(\infty)}_n(\pi;\Z_p)=H^{lf}_n(\pi;\Z_p).$
Note that $$H^{lf}_n(\pi;\Z_p)=H^{lf}_n(E\pi;\Z_p)=H^n_c(E\pi;\Z_p)=
H^n(\pi;\Z\pi\otimes\Z_p)=0$$
for all $n\ne N$. Here the second equality is the duality isomorphism between
homology and cohomology
with coefficient in a field applied to the one-point compactification of $E\pi$.
 For the last two equalities see ~\cite{Br}, Chapter VIII, Lemma 7.4 
and Theorem 10.1 respectively.

By the cited Theorem 10.1, $H^N(\pi,\Z\pi)$ is torsion free as an abelian group.
Therefore, $H^N(\pi,\Z\pi)\otimes\Z_p=\oplus\Z_p$.
The Universal Coefficient Formula for the cohomology with compact supports and the
equality $H^i_c(E\pi;\Z)=H^i(\pi,\Z\pi)$ imply that $H^N_c(E\pi;\Z_p)=\oplus\Z_p$.
Hence, $H^{(\infty)}_N(\pi;\Z_p)=\oplus\Z_p.$
\end{proof}

\begin{cor}\label{Cor1}
For every duality group $\pi$ with $cd(\pi)=N$ and any $n\ne N-1$,
the homology group $H^{(\infty)}_n(\pi)$ does not contain elements of finite order.
\end{cor}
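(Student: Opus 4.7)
The plan is to deduce the absence of torsion in $H^{(\infty)}_n(\pi;\Z)$ from the $\Z_p$-coefficient vanishing established in Proposition~\ref{finite coefficients}, using a Bockstein long exact sequence for $\ell_\infty$-homology.

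The first step is to set up such a sequence. Starting from the short exact sequence of coefficients $0\to\Z\xrightarrow{\cdot p}\Z\to\Z_p\to 0$, I would verify that it induces a short exact sequence of bounded chain complexes
$$0\to C^{(\infty)}_*(E\pi;\Z)\xrightarrow{\cdot p} C^{(\infty)}_*(E\pi;\Z)\to C^{(\infty)}_*(E\pi;\Z_p)\to 0.$$
Injectivity of multiplication by $p$ and the identification of its image with the kernel of reduction are immediate from the definitions. The point that needs checking is surjectivity of the right-hand map at the chain level: given $\sum\bar\mu_e e\in C^{(\infty)}_*(E\pi;\Z_p)$, lift each coefficient by its representative $\mu_e\in\{0,1,\dots,p-1\}\subset\Z$; the lifted chain $\sum\mu_e e$ has integer coefficients bounded by $p-1$, so it lies in $C^{(\infty)}_*(E\pi;\Z)$ and reduces to the given chain.

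The resulting Bockstein long exact sequence
$$\cdots\to H^{(\infty)}_{n+1}(\pi;\Z_p)\to H^{(\infty)}_n(\pi;\Z)\xrightarrow{\cdot p} H^{(\infty)}_n(\pi;\Z)\to H^{(\infty)}_n(\pi;\Z_p)\to\cdots$$
then closes the argument at once. For $n\ne N-1$ we have $n+1\ne N$, so Proposition~\ref{finite coefficients} gives $H^{(\infty)}_{n+1}(\pi;\Z_p)=0$ for every prime $p$. By exactness, multiplication by $p$ is injective on $H^{(\infty)}_n(\pi;\Z)$, so this group has no $p$-torsion. Since this holds for every prime $p$, there are no elements of finite order.

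The only technical hurdle is the chain-level surjectivity of the mod-$p$ reduction on uniformly bounded chains; once that is secured, the rest is formal homological algebra. Note that the argument genuinely breaks at $n=N-1$, since there $H^{(\infty)}_N(\pi;\Z_p)$ is nontrivial and can contribute honest $p$-torsion to $H^{(\infty)}_{N-1}(\pi;\Z)$ via the Bockstein, which is precisely why that degree is excluded from the statement.
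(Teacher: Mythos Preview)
Your proof is correct. The paper argues via the Universal Coefficient short exact sequence
\[
0\to H_n^{(\infty)}(\pi)\otimes\Z_p \to H_n^{(\infty)}(\pi;\Z_p) \to Tor(H_{n-1}^{(\infty)}(\pi),\Z_p)\to 0,
\]
reading off that the $Tor$ term vanishes whenever $H_n^{(\infty)}(\pi;\Z_p)=0$, while you instead run the Bockstein long exact sequence to see that multiplication by $p$ is injective. These are the same argument in two standard packagings; your version has the minor advantage of being self-contained, since you verify the chain-level short exact sequence directly, whereas the UCT route tacitly needs the $\ell_\infty$-chain groups $C^{(\infty)}_*(E\pi;\Z)$ to be free abelian (which they are, by N\"obeling's theorem on bounded $\Z$-valued functions, though the paper does not mention this).
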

\begin{proof}
In view of the Universal Coefficient Formula
$$
0\to H_n^{(\infty)}(\pi)\otimes\Z_p \to H_n^{(\infty)}(\pi;\Z_p) \to
Tor(H_{n-1}^{(\infty)}(\pi),\Z_p)\to 0
$$
Proposition~\ref{finite coefficients} implies that
$H_n^{(\infty)}(\pi)$ does not have  $p$-torsions for $n\ne N-1$.
\end{proof}
REMARK. One can derive that the $p$-torsion subgroup of $H_N^{(\infty)}(\pi)$ is $\oplus\Z_{p^{\infty}}$
where $\Z_{p^{\infty}}=\lim_{\rightarrow}\Z_{p^k}$.
Most likely for duality group this sum is empty.
Corollary~\ref{Cor1} and Theorem~\ref{Obstruction} imply the
following.
\begin{cor}\label{Cor2}
Suppose that an orientable closed $n$-manifold $M$ with the duality fundamental
group $\pi$ is rationally
inessential and $cd(\pi)\ne n+1$. Then $\dim_{MC}\Wi M< n$.
\end{cor}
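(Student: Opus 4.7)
The plan is to combine the homological obstruction \theoref{Obstruction} with \corref{Cor1} and use the rational inessentiality hypothesis to reduce the problem to a torsion argument. The setup is as follows: let $f : M \to B\pi$ classify the universal cover. By \theoref{Obstruction}, showing $\dim_{MC}\Wi M < n$ is equivalent to showing that $f_*([M])$ lies in $\ker\{pert_*^{\pi} : H_n(\pi;\Z) \to H_n^{(\infty)}(\pi;\Z)\}$.

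First I would extract the torsion consequence of rational inessentiality. Since $f_*([M]) = 0$ in $H_n(\pi;\Q)$, and this group is obtained from $H_n(\pi;\Z)$ by tensoring with $\Q$, the class $f_*([M])$ must be a torsion element of $H_n(\pi;\Z)$. Thus there exists a positive integer $k$ with $k \cdot f_*([M]) = 0$, and its image $pert_*^{\pi}(f_*([M]))$ in $H_n^{(\infty)}(\pi;\Z)$ is then also annihilated by $k$, i.e.\ is a torsion element there.

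Next I would apply \corref{Cor1}. The hypothesis $cd(\pi) \ne n+1$ translates to $n \ne N - 1$ where $N = cd(\pi)$, which is precisely the range in which \corref{Cor1} guarantees that $H_n^{(\infty)}(\pi;\Z)$ has no elements of finite order. Therefore the torsion image $pert_*^{\pi}(f_*([M]))$ must vanish, so $f_*([M]) \in \ker(pert_*^{\pi})$. Invoking \theoref{Obstruction} then yields the desired inequality $\dim_{MC}\Wi M < n$.

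There is no real obstacle at this stage since the heavy lifting has already been done: \propref{finite coefficients} controls the $\ell_\infty$-homology with finite coefficients via the duality isomorphism, and the Universal Coefficient Formula converts this into the torsion-freeness statement of \corref{Cor1}. The only subtlety worth stating cleanly in the proof is the step that rationally trivial classes are integrally torsion, which is immediate from the flatness of $\Q$ over $\Z$ and the fact that $H_n(\pi;\Z) \otimes \Q \cong H_n(\pi;\Q)$.
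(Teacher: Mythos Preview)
Your proposal is correct and is exactly the argument the paper has in mind: the paper states only that \corref{Cor1} and \theoref{Obstruction} imply the result, and you have filled in precisely the expected details---rational inessentiality forces $f_*([M])$ to be torsion in $H_n(\pi;\Z)$, hence its image under $pert_*^{\pi}$ is torsion in $H_n^{(\infty)}(\pi;\Z)$, which is torsion-free by \corref{Cor1} since $n\ne cd(\pi)-1$.
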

We recall that a group $\pi'$  virtually satisfies some property $\scr P$ if
it contains a finite index
torsion free subgroup $\pi$ with that property. In particular,
a group $\pi'$ is said to be a {\em virtually duality group} if some
subgroup $\pi\subset\pi'$ of finite
index is a duality group~\cite{Br} and a group $\pi'$
virtually satisfies the Analytic Novikov Conjecture if it contains a finite index subgroup $\pi$
that satisfies the Analytic Novikov Conjecture.

\begin{thm}
Suppose that the fundamental group $\pi'$ of a spin manifold $M'$
with the positive scalar curvature is a virtual duality group which
virtually satisfies the Analytic Novikov Conjecture. Then
$$\dim_{MC}\Wi M'< n.$$
\end{thm}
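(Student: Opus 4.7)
The plan is to reduce the theorem to \corref{Cor2}. First I would pass to a torsion-free finite index subgroup $\pi\subset\pi'$ which is both a duality group and satisfies the Analytic Novikov Conjecture, and to the corresponding finite Riemannian cover $p\colon M\to M'$. Then $M$ is a closed spin $n$-manifold with positive scalar curvature, $\pi_1(M)=\pi$, and $\widetilde{M}=\widetilde{M'}$ as Riemannian manifolds. Since macroscopic dimension is an invariant of the underlying metric space, it suffices to prove $\dim_{MC}\widetilde M<n$.

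Next I would verify the rational inessentiality of $M$ via the Dirac operator. By Rosenberg's theorem the positivity of the scalar curvature forces the index class $\alpha(M)\in KO_n(C^*_r\pi)$ to vanish. This class is the image under the assembly map $\mu\colon KO_n(B\pi)\to KO_n(C^*_r\pi)$ of the $KO$-fundamental class $\beta(M)=f_*[M]_{KO}\in KO_n(B\pi)$, where $f\colon M\to B\pi$ classifies the universal cover. The Analytic Novikov Conjecture for $\pi$ implies that $\mu$ is rationally injective, so $\beta(M)$ is a torsion class in $KO_n(B\pi)$. The Chern--Dold character identifies $KO_n(B\pi)\otimes\Q$ with $\bigoplus_{k\ge 0} H_{n-4k}(B\pi;\Q)$ and sends $\beta(M)$ to a class whose top-degree component is $f_*[M]$. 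Therefore $f_*[M]=0$ in $H_n(B\pi;\Q)$, i.e., $M$ is rationally inessential.

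With $M$ rationally inessential, \corref{Cor2} yields $\dim_{MC}\widetilde M<n$ as soon as $\cd(\pi)\neq n+1$, which completes the proof in that case. The main obstacle is the exceptional case $\cd(\pi)=n+1$: here \corref{Cor1} no longer excludes $p$-torsion in $H^{(\infty)}_n(\pi)$, and rational inessentiality alone does not force $pert_*^\pi(f_*[M])=0$. To handle it I would attempt to upgrade rational inessentiality to integral inessentiality. When $\pi$ is a Poincar\'e duality group of dimension $n+1$ this is automatic, since Bieri--Eckmann duality gives $H_n(\pi;\Z)\cong H^1(\pi;\Z)=\Hom(\pi,\Z)$, which is torsion-free; hence $f_*[M]=0$ integrally, and $\dim_{MC}\widetilde M<n$ follows from \theoref{Obstruction}. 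For a general duality group of dimension $n+1$ one likely needs either a stronger (integral) form of Novikov injectivity or a direct analysis of $H_n^{(\infty)}(\pi;\Z)$ via the Bieri--Eckmann duality module $D$.
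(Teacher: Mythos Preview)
Your reduction to a finite cover and your derivation of rational inessentiality match the paper's argument, and your application of \corref{Cor2} in the case $\cd(\pi)\ne n+1$ is correct. The gap is in the case $\cd(\pi)=n+1$: you handle only Poincar\'e duality groups and explicitly leave the general duality case open, whereas the paper treats it uniformly. The missing idea is that you are using less of the Novikov hypothesis than you are entitled to.

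You invoke the Analytic Novikov Conjecture only to conclude that the assembly map $\mu$ is \emph{rationally} injective, hence that $f_*[M]_{KO}$ is torsion. But the conjecture, as stated in the paper, asserts that $\alpha$ is an honest monomorphism, so Rosenberg's theorem gives $f_*[M]_{KO}=0$ in $KO_n(B\pi)$ \emph{integrally}. This stronger vanishing is exactly what resolves the $\cd(\pi)=n+1$ case for an arbitrary duality group. The paper argues as follows: since $B\pi/B\pi^{(n-1)}$ is $(n-1)$-connected and $(n+1)$-dimensional, it is homotopy equivalent to a wedge of copies of $S^n$, Moore spaces $M(\Z_{m_j},n)$, and $S^{n+1}$. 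Taking $M$ to have one top cell, the collapse map $\bar f\colon S^n=M/M^{(n-1)}\to B\pi/B\pi^{(n-1)}$ satisfies $\bar f_*[S^n]_{KO}=0$ because $f_*[M]_{KO}=0$. But any essential map from $S^n$ to $S^n$ or to $M(\Z_m,n)$ sends $[S^n]_{KO}$ to a nonzero class, so $\bar f$ is null-homotopic. Hence $\bar f$ induces zero on integral homology, and since $q\colon B\pi\to B\pi/B\pi^{(n-1)}$ is injective on $H_n$, one concludes $f_*[M]=0$ in $H_n(B\pi;\Z)$. Integral inessentiality then gives $\dim_{MC}\widetilde M<n$ via \theoref{Obstruction}. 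Your Bieri--Eckmann observation for Poincar\'e duality groups is a pleasant shortcut in that special case, but it is not needed once you exploit the integral vanishing of the $KO$-class.
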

\begin{proof} The conditions imply that there is a subgroup $\pi\subset\pi'$ of
finite index which is a duality group and satisfies the  Analytic
Novikov Conjecture. Let $p:M\to M'$ be a finite covering that
corresponds to the subgroup $\pi$. Clearly, $M$ is spin and  a
metric of positive scalar curvature on $M'$ lifts to a metric with
positive scalar curvature on $M$. We show that  $\dim_{MC}\Wi M< n$.
The required result would follow from the fact that the manifolds
$M$ and $M'$ have the same universal cover $\Wi M=\Wi M'$.

Let $f:M\to B\pi$ be a classifying map of the universal cover $\Wi
M$. By Rosenberg's theorem~\cite{R} we have $\alpha\circ
f_*([M]_{KO})=0$ where
$$\alpha: KO_*(B\pi)\to KO_*(C_r(\pi))$$ is the analytic assembly map
and $[M]_{KO}$ is the $KO$-fundamental class defined by the spin
structure. The Analytic Novikov Conjecture for $\pi$ says that
$\alpha$ is a monomorphism. Therefore, $f_*([M]_{KO})=0$. There is a
natural isomorphism called the Chern-Dold character
$$KO_n(X)\otimes\Q\to\bigoplus_{i\in\Z} H_{n+4i}(X;\Q)$$ (see for
example~\cite{Ru}, Theorem-Definition 7.13). We note that under the
Chern-Dold character isomorphism the rationalized KO-fundamental
class $[M]_{KO}$ of $M$ is taken to an element
$a=(a_i)_{i\in\Z}\in\oplus_{i\in\Z} H_{n+4i}(M;\Q)$ with $a_0\ne 0$.
This is an obvious fact for $M=S^n$. For general $M$ it can be shown
by taking a degree one map of $M$ onto the $n$-sphere $S^n$.
Therefore, $f_*([M])=0$ in  $H_n(B\pi;\Q)$. Then
Corollary~\ref{Cor2} implies the inequality  $\dim_{MC}\Wi M< n$.
This completes the proof if $cd(\pi)\ne n+1$.

The case $cd(\pi)=n+1$ is treated separately. We claim that in this case $M$
is integrally inessential.
 Since the complex $B\pi/B\pi^{(n-1)}$ is $(n+1)$-dimensional and
 $(n-1)$-connected, it is homotopy equivalent to the wedge of spheres
 and Moore spaces $\vee_i S^n\bigvee\vee_j M(\Z_{m_j},n)\bigvee\vee_kS^{n+1}$.
 We note that any essential map of $S^n$ to $S^n$ or to
 the Moore space $M(\Z_m,n)$ takes the KO-fundamental class $[S^n]_{KO}$ to
 nonzero. We may
assume that $M$ has a CW structure with one top dimensional cell and
the map $f$ is cellular. The commutativity of the diagram
$$
\begin{CD}
M_0 @>f>> B\pi\\
@VpVV @VqVV\\
S^n=M_0/M^{(n-1)}_0 @>\bar f>> B\pi/B\pi^{(n-1)}\\
\end{CD}
$$
and the fact that $f_*([M]_{KO})=0$ imply that $\bar
f_*([S^n]_{KO})=0$. This implies that $\bar f$ is null-homotopic and
hence it induces a zero homomorphism of the integral homology
groups. Since the quotient map $q:B\pi\to B\pi/B\pi^{(n-1)}$ induces
a monomorphism of $n$-dimensional homology groups, $f_*([M])=0$ in
$H_n(B\pi;\Z)$.

\end{proof}

\end{document}